\newtheorem{thm}{Theorem}[section]
\theoremstyle{definition} 
\theoremstyle{question} 
\theoremstyle{remark} 
\newcommand*{\rom}[1]{\expandafter\@slowromancap\romannumeral #1@}
\def\bege{\begin{equation}} \def\ende{\end{equation}}
   \def\begr{\begin{eqnarray}}
	\def\endr{\end{eqnarray}} 
\def\bege{\begin{equation}} \def\ende{\end{equation}}
\def\begr{\begin{eqnarray}} \def\endr{\end{eqnarray}} \def\bnum{\begin{enumerate}} \def\enum{\end{enumerate}}
\begin{document}
	
		\begin{center}
		\textbf{ Edge Resolvability of Crystal Cubic Carbon Structure}
	\end{center}
	\begin{center}
	Sahil Sharma$^{1}$, Vijay Kumar Bhat$^{2,}$$^{\ast}$, Sohan Lal$^{3}$
\end{center}

\begin{center}
	School of Mathematics,\end{center} \begin{center}Shri Mata Vaishno
	Devi University,\end{center}\begin{center}Katra-$182320$, Jammu and
	Kashmir, India.
\end{center}

\begin{center}
	1. sahilsharma2634@gmail.com 2. vijaykumarbhat2000@yahoo.com 3. sohan1993sharma@gmail.com
\end{center}

\hspace{-5.0mm}\textbf{Abstract} Chemical graph theory is commonly used to analyse and comprehend chemical structures and networks, as well as their features. The resolvability parameters for graph $G$= $(V,E)$ are a relatively new advanced field in which the complete structure is built so that each vertex (atom) or edge (bond) represents a distinct position. In this article, we study the resolvability parameters i.e., edge resolvability of chemical graph of crystal structure of cubic carbon $CCS(n)$.\\\\

\hspace{-5.0mm}\textbf{Keywords}: Edge metric dimension, metric dimension, crystal cubic carbon, cubes\\\\
\textbf{MSC(2020):} 13A99, 05C12\\\\

\hspace{-5.0mm}\textbf{$1$. Introduction }\\

     Mathematical applications have been increasingly popular in recent years. Graph theory has rapidly grown in theoretical conclusions as well as applicability to real-life issues as a handy tool for dealing with events relations. Distance is a concept that penetrates all of the graph theory, and it is utilized in isomorphism tests, graph operations, maximal and minimal connectivity problems, and diameter problems. Several characteristics linked to graph distances have piqued the interest of several scholars. One of them is the metric dimension.\\

	The notion of \textit{metric dimension} (MD) was introduced by Slater \cite{li}, Melter and Harary \cite{ki} independently. They termed the ordered subset $K =\{\alpha_{1},\alpha_{2},\alpha_{3},...,\alpha_{m}\}$ of vertices of graph $G = (V,E)$ as resolving set, if the representation $r(u|K) = (d(u,\alpha_1),d(u,\alpha_2),...,d(u,\alpha_m))$ for each $u\in V$ is unique, where $d(u,\alpha_{m})$ is the shortest path between $u$ and $\alpha_{m}$. The cardinality of the minimal resolving set is called MD (locating number) of $G$, and is denoted by $dim(G)$. Chartrand et al. \cite{ti} studied MD of path graphs $(P_n)$ and complete graph $(K_n)$. Later on, Caceres et al. \cite{pi} studied about MD of the cartesian product of  graphs, hypercubes, and cycles graphs. MD of regular graphs, Jahangir graphs, prism graphs, and circulant graphs was examined by several different authors. In chemistry, MD is used to give a series of compounds a unique representation, which leads to drug discovery. Other applications of MD include navigation, combinatorial optimization, and games such as coin weighing, etc. \cite{ti,VK}.\\
	
	 However in connection with the study of a new variant of metric generator in graphs Kelenc et al. \cite{len}, introduced \textit{edge metric dimension of a graph} (EMD). This is based on the fact that an ordered subset $H$ of vertices of $G =(V,E)$ is called an edge resolving set (edge metric generator), if for $e_1 \neq e_2 \in E$, there is a vertex $h \in H$ such that $d(e_1,h) \neq d(e_2,h)$, where $d(e,h) = min\{d(a,h),d(b,h)\}, ab = e \in E\; \mbox{and}\; h \in H$. The cardinality of the smallest edge metric generator (edge basis) is called EMD and is denoted by $edim(G)$. After that, Zhang and Gao  \cite{fd} discussed EMD of some convex polytopes. EMD of path graphs, complete graphs, bipartite graphs, wheel graphs, and many others have been studied. Recently, knor et al. \cite{mar} studied graphs with $edim(G) < dim(G)$.  After the introduction of EMD by Kelenc et al. \cite{len}, several other authors \cite{den,se,Kg,SS,sk}  have studied it thoroughly.\\
	 
	The structure of a chemical molecule is commonly described by a group of functional groups placed on a substructure. The structure is a graph-theoretic labelled graph, with vertex and edge labels representing atom and bond types, respectively. A collection of compounds is primarily defined by the substructure common to them, which is characterised by modifying the set of functional groups and/or permuting their locations. Traditionally, these “positions” simply reflect uniquely defined atoms (vertices) of the substructure (common subgraph). These positions seldom form a minimum set $K$ for which every two distinct vertices have distinct ordered k-tuples forming a minimum dimensional representation of the positions definable on the common subgraph. In this context, $K$ is referred to as a resolving set relative to V(G).\\
	 
	 We can assess whether any two compounds in the collection share the same functional group at a given position using the standard pattern. When analyzing whether a compound's characteristics are responsible for its pharmacological activity, this comparison statement is important \cite{pi,ti}. Sharma et al. \cite{CP} studied the mixed metric resolvability of chemical compound polycyclic aromatic hydrocarbon networks. The problem of computing the edge metric dimension of $CCS(n)$ and certain generalisations of these graphs is discussed in this work. One of the forms of carbon, namely diamond, is anticipated to convert into the $C_8$ structure, which is a cubic body-centered structure having 8 elements in its unit cell, at pressures exceeding 1000 GPa (gigapascal). Its composition is comparable to cubane and is found in one of silicon's metastable phases. Carbon sodalite was postulated as the structure of this phase in 2012. This cubic carbon phase could be useful in astronomy.\\

	Several authors studied different topological properties of $CCS(n)$. Yang et al. \cite{Yan} studied the Augmented Zagreb index, forgotten index, Balaban index, and redefined Zagreb indices of $CCS(n)$. Baig et al. \cite{BA} computed the degree-based additive topological indices mainly atom bond connectivity index, geometric index, first and second Zagreb index, etc. Further, Imran et al. \cite{IMA} studied eccentric based bond connectivity index and eccentric-based geometric arithmetic index of $CCS(n)$.  For further studies concerning the $CCS(n)$ one can refer to \cite{ZHA,ZAH} and references therein.\\
	 	 
	Zhang and Naeem \cite{Nem} studied the metric dimension of $CCS(n)$. They computed that for $n = 1$, $dim(CCS(n))$ is $3$, and for $n\geq 2$, $dim(CCS(n))$ is $ 7^{n-2}\times 16$. To the best of our knowledge, no work has been reported regarding the edge metric basis
	and edge metric dimension (EMD) of $CCS(n)$. For this, we consider the baseline paper reported by (Zhang and Naeem \cite{Nem}) and further extend the result with comparisons. In this study we compute the EMD of $CCS(n)$ and we also show that MD and EMD of $CCS(n)$ are the same.\\
	 	     
	\hspace{-5.0mm}\textbf{2. Crystal Cubic Carbon Structure $CCS(n)$}\\
	
	The valency of carbon allows it to create a variety of allotropes. Diamond and graphite are well-known forms of carbon, while crystal cubic carbon, also known as pcb, is one of its potential allotropes.
	  The chemical structure of $CCS(n)$ is constructed as
	\begin{enumerate}
		\item $CCS(n)$ starts from one unit cube, also called $CCS(1)$, and central cube as shown in Fig. 1.
		\item At the second level all the eight vertices of $CCS(1)$ are attached to different cubes through edges (bridge edges), which results in the construction of $CCS(2)$ as shown in Fig. 2.
		\item Again at the next level all the $7\times 8$ vertices of degree $3$ of $CCS(2)$ are attached to different cubes through edges, which results in the construction of $CCS(3)$.
		\item Similarly we construct $CCS(n)$ by attaching new cubes to the vertices of degree 3 at the preceding level as shown in Fig. 3.\\
		
	\end{enumerate} 
	At each level, we receive a new set of cubes that are attached to the vertex of degree 3 of cubes of the previous level. These cubes are referred to as the outermost cubes. In $CCS(2)$, we can also see that there are eight outermost cubes and $ 7\times 8$ vertices of degree 3. So at the third level,  $7\times 8$ cubes are attached. Similarly, there are $7^{n-2}\times8$ vertices of degree 3 at each subsequent level. So, this process is again repeated to get the next level.\\
	
	\hspace{-5.0mm}\textbf{2.1 Origin of Crystal Cubic Carbon Structure $CCS(n)$}\\
	
	Carbon, as one of the elements that have been known and used on the planet since ancient times, continues to draw attention due to its wide range of industrial and commercial applications, ranging from cutting tools to electrical and optoelectronic materials, and so on. Carbon exhibits the adaptability of $sp$, $sp^2$, and $sp^3$ hybridization states, resulting in a variety of rich allotropic carbon forms. Graphite and diamond are two prevalent allotropes of carbon, with the former being the most thermodynamically stable form of all known carbon allotropes at ambient temperatures (e.g. hexagonal diamond, amorphous carbon, carbyne, fullerenes, carbon nanotubes, crystal cubic carbon and graphene).\\
	
	Diamond is projected to convert into a body-centered cubic structure at ultra high pressures of above 1000 GPa. This phase is significant in astrophysics and the study of the deep cores of planets such as Uranus and Neptune. In 1979, \cite{Mat} a super dense and super hard material that looked like this phase was synthesised and published, having the Im3m space group and eight atoms per primitive unit cell. It was claimed that the so-called $C_8$ structure, with eight-carbon cubes comparable to cubane in the Im3m space group, had been synthesised, with eight atoms per primitive unit cell or 16 atoms per standard unit cell. The super cubane structure, the BC8 structure, a structure with clusters of four carbon atoms in tetrahedra in space group I43m with four atoms per primitive unit cell (eight per conventional unit cell), and a structure named "carbon sodalite" were all discussed in a report published in 2012, \cite{pok}. In 2017, Baig et al. \cite{BA} modified and extended the structure of carbon sodalite and named it crystal cubic carbon $CCS(n)$. $CCS(n)$ as a carbon allotrope is assumed to have wide a range of industrial applications. For more study on carbon allotropes one can refer \cite{ALL,CAL}.
	
		       \begin{center}
		\begin{figure}[h!]
			\centering
			\includegraphics[width=2.3in]{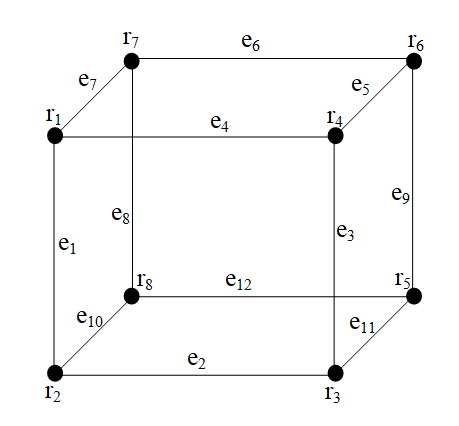}
			\caption{The chemical structure of $CCS(1)$.}\label{cu1}
		\end{figure}
	\end{center}
	
	\begin{center}
		\begin{figure}[h!]
			\centering
			\includegraphics[width=3.5in]{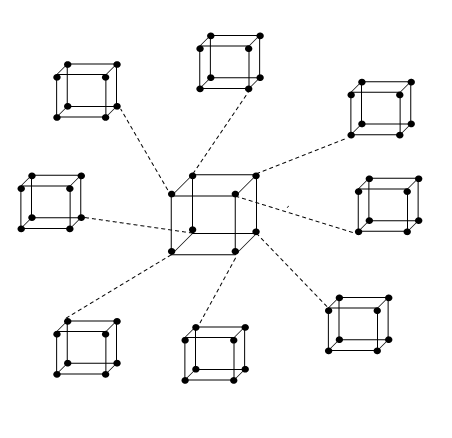}
			\caption{The chemical structure of $CCS(2)$.}\label{cu06}
		\end{figure}
	\end{center}

	\begin{center}
		\begin{figure}[h!]
			\centering
			\includegraphics[width=5.0in]{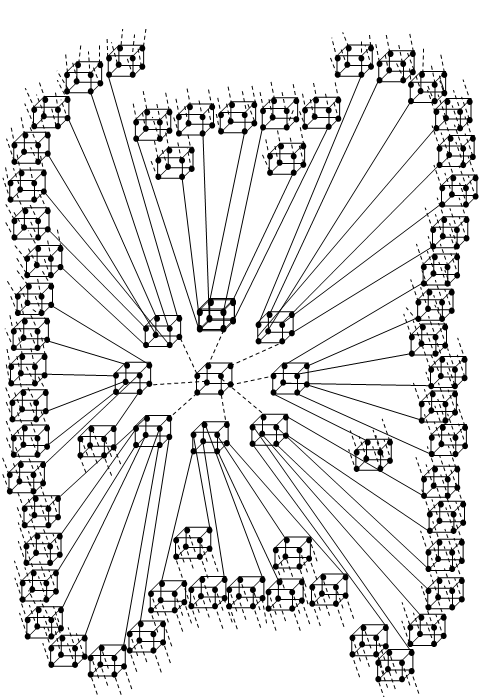}
			\caption{The chemical structure of $CCS(n)$.}\label{cube77}
		\end{figure}
	\end{center}
	\FloatBarrier	 
	 	 
\hspace{-5.0mm}\textbf{3. Edge metric dimension of $CCS(n)$}\\

The EMD of various graphs like Mobius network, wheel graph, convex polytopes, windmill graph, etc. have been studied. We will discuss the EMD of $CCS(n)$, $n\geq 1$ in this section.

\begin{thm}
	For $n=1$, the edge metric dimension of $CCS(n) = 3$.
	
\end{thm}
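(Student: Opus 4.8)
The plan is to treat $CCS(1)$ as the graph of a single cube, i.e. the $3$-cube $Q_3$: it has $8$ vertices, $12$ edges, is $3$-regular, and has diameter $3$. I would first fix a labelling of the eight vertices by binary triples in $\{0,1\}^3$ so that two vertices are adjacent exactly when their labels differ in one coordinate; then the graph distance between any two vertices is their Hamming distance, and every computation reduces to counting coordinate disagreements. I would attack the statement by proving the matching bounds $edim(CCS(1)) \ge 3$ and $edim(CCS(1)) \le 3$.

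For the lower bound I would use a counting (pigeonhole) argument. The key observation is that for any vertex $w$ and any edge $e = ab$ we have $d(e,w) = \min\{d(a,w),d(b,w)\} \in \{0,1,2\}$: since $a$ and $b$ are adjacent in a bipartite graph their distances to $w$ have different parities and so differ by exactly $1$, hence they cannot both equal the eccentricity $3$ and the minimum is at most $2$. Consequently, relative to any set of only two landmark vertices each edge receives a distance vector lying in a set of size at most $3^2 = 9$. Since $CCS(1)$ has $12 > 9$ edges, two distinct edges must share the same vector, so no $2$-element set is an edge metric generator and therefore $edim(CCS(1)) \ge 3$.

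For the upper bound I would exhibit an explicit edge metric generator of size $3$, choosing three suitably spread vertices (for instance $000$, $110$ and $011$), and then tabulate the triple $(d(e,h_1),d(e,h_2),d(e,h_3))$ for each of the $12$ edges, using $d(e,h) = \min\{d(a,h),d(b,h)\}$. Checking that these twelve vectors are pairwise distinct is a finite verification, which yields $edim(CCS(1)) \le 3$; combined with the lower bound this gives $edim(CCS(1)) = 3$.

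The conceptual heart of the argument is the lower-bound counting step, since once the possible per-coordinate values are pinned to $\{0,1,2\}$ the pigeonhole conclusion is immediate. The only genuine obstacle is bookkeeping: I must first be certain that the figure's $CCS(1)$ is exactly the cube graph rather than a cube carrying an extra "central" component that would alter the edge count, and then carry out the twelve-row distance table without error, since a single miscomputed minimum could either manufacture a spurious collision or conceal a real one.
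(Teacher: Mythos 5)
Your proposal is correct, and its lower bound takes a genuinely different route from the paper's. For the upper bound you and the paper do essentially the same thing: fix three explicit landmark vertices and verify that the twelve edge vectors are pairwise distinct (your set $\{000,110,011\}$, three vertices pairwise at distance $2$, does work --- the twelve vectors come out distinct --- whereas the paper uses three consecutive vertices of one face, i.e.\ a path $r_1r_2r_3$; both choices succeed). The real difference is the lower bound. The paper rules out $edim(CCS(1))=1$ by quoting that only paths have edge metric dimension $1$, and rules out $edim(CCS(1))=2$ by a symmetry-based case analysis: up to the automorphisms of the cube, a pair of vertices is either adjacent, on a face diagonal, or on a space diagonal, and in each case the paper exhibits two edges with equal representations. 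Your argument replaces all of this with one counting step: in the bipartite graph $Q_3$ the two endpoints of any edge have distances to a fixed vertex $w$ of different parity, hence differing by exactly $1$, and since every vertex of $Q_3$ has eccentricity $3$, every edge-to-vertex distance lies in $\{0,1,2\}$; therefore $k$ landmarks yield at most $3^k$ distinct edge vectors, and $3^2=9<12$ forces $edim(CCS(1))\geq 3$ (the case $k=1$ is covered by the same inequality, so the path criterion is not even needed). This is cleaner and more robust: it requires no classification of vertex pairs up to symmetry (a reduction the paper asserts without justification), it treats $k=1$ and $k=2$ uniformly, and the method generalizes to other bipartite graphs. What the paper's approach buys in exchange is concreteness --- explicit colliding edge pairs for each candidate generator --- and independence from bipartiteness and eccentricity computations. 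The only obligations left in your write-up are the ones you flag yourself: confirming that $CCS(1)$ is exactly the cube graph $Q_3$ (it is, by the paper's construction) and actually carrying out the twelve-row distance table (it checks out).
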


\begin{proof} First, claim that $edim(CCS(1)) \leq 3$. We have labelled all 8 vertices of the cube as $r_i$, $1\leq i \leq 8$, and edges as $e_i$, $1\leq i \leq 12$ as shown in Fig. 1.\\
Let $R_E = \{r_1,r_2,r_3\}$ be set of vertices of graph $CCS(1)$. We have to show that $R_E$ is an edge metric generator of $CCS(1)$.\\
	
	The representation of each edge $e_i$, $1\leq i \leq 12$ with respect to $R_E$ is given as 
	 \begin{align*}
		r\left[e_1|R_E\right] = (0,0,1) \hspace{3cm}& r\left[e_7|R_E\right] = (0,1,2)\\ 
		r\left[e_2|R_E\right] = (1,0,0)  \hspace{3cm}& r\left[e_8|R_E\right] = (1,1,2)\\ 
		r\left[e_3|R_E\right] = (1,1,0)  \hspace{3cm}& r\left[e_9|R_E\right] = (2,2,1)\\ 
		r\left[e_4|R_E\right] = (0,1,1)  \hspace{3cm}& r\left[e_{10}|R_E\right] = (1,0,1)\\ 
		r\left[e_5|R_E\right] = (1,2,1)  \hspace{3cm}& r\left[e_{11}|R_E\right] = (2,1,0)\\ 
		r\left[e_6|R_E\right] = (1,2,2)  \hspace{3cm}& r\left[e_{12}|R_E\right] = (2,1,1)\\ 
	\end{align*}
    The representation of each edge is unique. Therefore $edim(CCS(n)) \leq 3$.\\
    
    \hspace{-5.0mm}Next, we claim that $edim(CCS(1)) \geq 3$.\\
    Since, for any graph $G$, $edim(G) = 1$ iff $G$ is a path graph. This implies that $edim(CCS(1)) \neq 1$. Now, if $edim(CCS(1)) = 2$, then edge metric genarator $R_E$ of $CCS(1)$ consists of two vertices (say $R_E= \{r_i,r_j\}$; $i\neq j$). Due to the symmetry of $CCS(1)$, we have two choices for the vertices of $R_E$.
    \begin{enumerate}
    	\item Both the vertices  of $R_E$ are on the same face of the cube.\\
    	Further, we have two choices
    	\subitem (i) $r_i$ and $r_j$ are on the face diagonal of the cube.
    	\subitem (ii) $r_i$ and $r_j$ are on the same edge of the face of the cube.
    	\item Both the vertices of $R_E$ are on the primary diagonal of the cube.
    \end{enumerate}
     \textbf{Case 1}: Firstly, when both vertices are on the same face and along the face diagonal, then we consider $R_E = \{r_1,r_3\}$. We have $r\left[e_1|R_E\right] = (0,1) = r\left[e_4|R_E\right]$.\\
     Secondly, when both vertices are on the same face and along the same edge, then we consider  $R_E = \{r_2,r_3\}$. We have $r\left[e_4|R_E\right] = (1,1) = r\left[e_{12}|R_E\right]$.\\
     
     \textbf{Case 2}: When both vertices are on the primary diagonal of the cube, then we consider $R_E = \{r_3,r_7\}$. We have  $r\left[e_4|R_E\right] = (1,1) = r\left[e_5|R_E\right]$.\\
     
     In both cases, we get a contradiction. Therefore $edim(CCS(1)) \geq 3$. Hence $edim(CCS(1)) =3$.\\
     
	\end{proof}

\begin{thm}
	For $n\geq 2$, the edge metric dimension of $CCS(n) = 7^{n-2}\times 16$.
	
\end{thm}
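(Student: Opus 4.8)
The plan is to establish the two inequalities $edim(CCS(n)) \le 7^{n-2}\times 16$ and $edim(CCS(n)) \ge 7^{n-2}\times 16$ separately, both built on the pendant (leaf-like) nature of the outermost cubes. Recall that at level $n$ there are exactly $7^{n-2}\times 8$ outermost cubes, each joined to the remainder of $CCS(n)$ by a single bridge edge. For such a cube $C$, write $c$ for the unique vertex of $C$ incident to its bridge edge (its ``exit vertex''). The first thing I would isolate is the following structural identity: since every path leaving $C$ must pass through $c$, for any landmark $h\notin C$ and any edge $e$ of $C$ we have
\[
 d(e,h)=d(e,c)+d(c,h).
\]
Thus an external landmark reports about the edges of $C$ only the quantity $d(e,c)$, shifted by the common constant $d(c,h)$, and so cannot separate two edges of $C$ that are equidistant from $c$.

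For the lower bound I would prove that every outermost cube $C$ contains at least two vertices of any edge metric generator $R_E$. Suppose $C$ contains at most one. If it contains none, then by the displayed identity the three edges incident to $c$, all at distance $0$ from $c$, receive identical representations, a contradiction. If it contains exactly one vertex $h_0$, then the representation of any edge $e$ of $C$ is governed by the pair $(d(e,c),d(e,h_0))$, because all external landmarks merely report $d(e,c)$ up to a constant. Hence $R_E$ acts on the edges of $C$ exactly as the two-vertex set $\{c,h_0\}$ acts on a single cube. Since Theorem~1 shows, through its symmetry-based case analysis, that no two vertices edge-resolve a cube, there exist edges $e_1,e_2$ of $C$ with $d(e_1,c)=d(e_2,c)$ and $d(e_1,h_0)=d(e_2,h_0)$, whence $r[e_1|R_E]=r[e_2|R_E]$, again a contradiction. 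Because these requirements are imposed on disjoint outermost cubes and external landmarks never help inside a given cube, the counts add, giving $edim(CCS(n))\ge 2\times 7^{n-2}\times 8 = 7^{n-2}\times 16$.

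For the upper bound I would exhibit an edge metric generator of precisely this cardinality by placing two carefully chosen vertices in each outermost cube, mirroring the $n=1$ basis of Theorem~1 with the understanding that the exit vertex now supplies, ``for free'' via the external structure, the role of the third landmark used there. I would then verify that all representations are distinct in three stages: (i) edges inside a common outermost cube are separated by its two internal landmarks together with the coordinate $d(e,c)$ read off from the rest of the generator; (ii) edges in different outermost cubes are separated because the generator coordinates increase with graph distance along the tree of attached cubes, which grows strictly level by level; and (iii) the inner edges, namely those of the central and intermediate cubes and all bridge edges, are separated using the asymmetry of distances to landmarks lying in different branches of the structure.

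The hard part will be entirely on the upper-bound side, specifically stage (iii), where one must confirm that the central and intermediate cubes need \emph{no} landmarks of their own, so that the total stays at $7^{n-2}\times 16$. Since the edge count grows like $7^{n}$, a one-by-one check is impossible; instead I would give a uniform, level-indexed coordinate description of each edge and rule out collisions by a branch-distinguishing argument rather than enumeration. By contrast, the lower bound reduces cleanly to the single-cube fact from Theorem~1 through the exit-vertex identity, and should be routine once that identity is recorded.
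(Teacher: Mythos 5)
Your proposal follows the same global strategy as the paper --- both inequalities, with the generator consisting of exactly two vertices in each of the $7^{n-2}\times 8$ outermost cubes --- but your lower bound is argued by a genuinely cleaner route. The paper proves that every outermost cube must contain at least two landmarks by brute force: it fixes a labelled outermost cube $C_n$, shows $r\left[e_1|R_E\right]=r\left[e_{11}|R_E\right]$ when the cube contains no landmark, and then runs eight separate cases $r_1=a_1,\dots,a_8$ for a lone landmark, exhibiting a colliding pair of edges in each case. Your exit-vertex identity $d(e,h)=d(e,c)+d(c,h)$ subsumes all of this: it shows that external landmarks report only $d(e,c)$ up to an additive constant, so a cube with at most one internal landmark is edge-resolved, in effect, by at most two cube vertices, which Theorem 1's symmetry analysis (adjacent pair, face diagonal, space diagonal) has already excluded. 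That reduction is correct; the only points worth recording explicitly are that shortest paths between vertices of a pendant cube never leave it (so the standalone-cube distances of Theorem 1 apply verbatim), and the degenerate case $h_0=c$, which is immediate. On the upper bound your proposal coincides with the paper's: its generator is exactly your ``two vertices per outermost cube,'' and its case (i) computation shows that the triple $\big(d(e,a_1),d(e,a_2),d(e,c)\big)$ separates the twelve edges of an outermost cube --- precisely your observation that the exit vertex acts as a free third landmark. The stage you defer as the hard part (inner cubes and bridge edges needing no landmarks of their own) is what the paper's cases (ii)--(viii) supply: edges along one chain are separated by their differing distances to the landmarks at the chain's outer end; edges in different branches are separated by landmarks lying beyond their common branching cube; and edges of the central or a middle cube are separated because each corner $a_i$ of such a cube heads its own branch carrying dedicated landmarks $r_{2i-1},r_{2i}$, so the three edges at $a_i$ are told apart from those at $a_j$ by distances into the respective branches --- exactly the branch-distinguishing argument you anticipate. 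So your plan is viable; note, though, that the paper itself executes these cases quite informally (case (viii) is dismissed ``by the symmetry of the structure''), so carrying out your uniform, level-indexed version carefully would strengthen, not merely match, the published argument.
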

\begin{proof}
	Let $R_E = \{r_1,r_2,r_3,...r_k\}$ be the set of vertices of type $a_1$ and $a_2$ of outermost cubes $U_n$ as shown in Fig. 4.\\
	The cube $U_n$ is the outermost and attached with the chain of cubes through  vertex $c$. The degree of vertex $c$ is $4$, and all other vertices are of degree 3. We assume that $R_E$ is an edge metric generator and $k =7^{n-2}\times 16$ (since there are $7^{n-2}\times 8$ cubes in the outermost layer of $CCS(n)$). The representation of any two distinct arbitrary edges of $CCS(n)$ can be compared in the following ways.\\
		\begin{center}
		\begin{figure}[h!]
			\centering
			\includegraphics[width=3.0in]{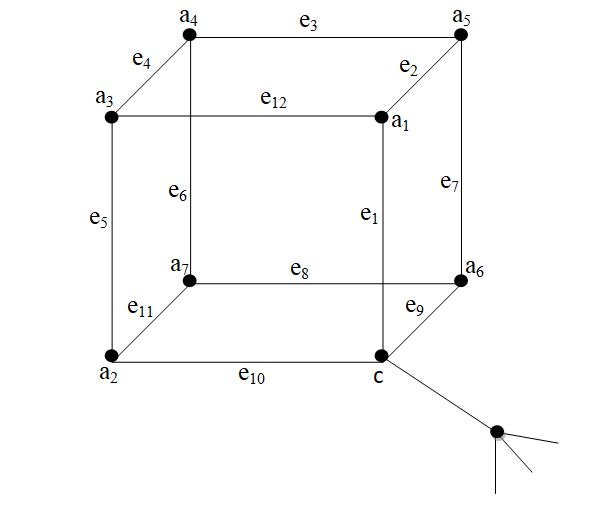}
			\caption{ Cube $U_n$ (arbitrary) from the outermost cubes of $CCS(n)$.}\label{cu2}
		\end{figure}
	\end{center}
	\FloatBarrier
	
	(i) When both the arbitrarily selected edges are on $U_n$ of $CCS(n)$ as shown in Fig. 4.\\
	
	We assume that $r_1 = a_1$ and $r_2 = a_2$, then $R_E = \{a_1,a_2,r_3,r_4...,r_k\}$. The representation of each edge $e_i$, $1\leq i \leq 12$ of cube $U_n$ concerning $R_E$ is given as\\

\hspace{-6.0mm}	$r\left[e_1|R_E\right] = \big(0,1,d(c,r_3),d(c,r_4),...d(c,r_k)\big)$\\
	$r\left[e_2|R_E\right] = \big(0,2,d(c,r_3)+1,d(c,r_4)+1,...d(c,r_k)+1\big)$\\
	$r\left[e_3|R_E\right] = \big(1,2,d(c,r_3)+2,d(c,r_4)+2,...d(c,r_k)+2\big)$\\
	$r\left[e_4|R_E\right] = \big(1,1,d(c,r_3)+2,d(c,r_4)+2,...d(c,r_k)+2\big)$\\
	$r\left[e_5|R_E\right] = \big(1,0,d(c,r_3)+1,d(c,r_4)+1,...d(c,r_k)+1\big)$\\
	$r\left[e_6|R_E\right] = \big(2,1,d(c,r_3)+2,d(c,r_4)+2,...d(c,r_k)+2\big)$\\
	$r\left[e_7|R_E\right] = \big(1,2,d(c,r_3)+1,d(c,r_4)+1,...d(c,r_k)+1\big)$\\
	$r\left[e_8|R_E\right] = \big(2,1,d(c,r_3)+1,d(c,r_4)+1,...d(c,r_k)+1\big)$\\
	$r\left[e_9|R_E\right] = \big(1,1,d(c,r_3),d(c,r_4),...d(c,r_k)\big)$\\
	$r\left[e_{10}|R_E\right] = \big(1,0,d(c,r_3),d(c,r_4),...d(c,r_k)\big)$\\
	$r\left[e_{11}|R_E\right] = \big(2,0,d(c,r_3)+1,d(c,r_4)+1,...d(c,r_k)+1\big)$\\
	$r\left[e_{12}|R_E\right] = \big(0,1,d(c,r_3)+1,d(c,r_4)+1,...d(c,r_k)+1\big)$\\
	
	We see that all these representations are different.\\
	
	(ii) When both the arbitrarily selected edges are on the distinct cubes of chain $CH_{c}$, one end of which is the cube of the outermost level as shown in Fig. 5.\\

	Let us assume that two selected edges $e_L$ and $e_M$ are on two distinct cubes namely $L$-cube and $M$-cube respectively on the chain of cubes. Assume that the chain of cubes $CH_{c}$ in $CCS(n)$, has one end as its central cube and the other end is the outermost cube which has a pair of distinct arbitrary edge resolving vertices (say $r_1$ and $r_2$). Since $e_L$ is one of the edges of $L$-cube and $e_M$ is edge of $M$-cube, then it clear that $d(e_L,R_E) < d(e_M,R_E)$. Hence representation of both edges concerning $R_E$ is distinct.\\\\
	
		(iii) When both the arbitrarily selected edges are bridge edges on the chain of cubes $CH_{c}$, one end of which is the cube of the outermost level as shown in Fig. 5.\\
	
	Let us assume that two selected edges $e_{b_i}$ and $e_{b_j}$ are on the chain of cubes. Assume that the chain of cubes $CH_{c}$ in $CCS(n)$, has one end as its central cube and the other end is the outermost cube which has a pair of distinct arbitrary edge resolving vertices (say $r_1$ and $r_2$).Then it clear that $d(e_{b_i},R_E) < d(e_{b_j},R_E)$. Hence representation of both edges concerning $R_E$ is distinct.\\\\
	
		\begin{center}
		\begin{figure}[H]
			\centering
			\includegraphics[width=4.0in]{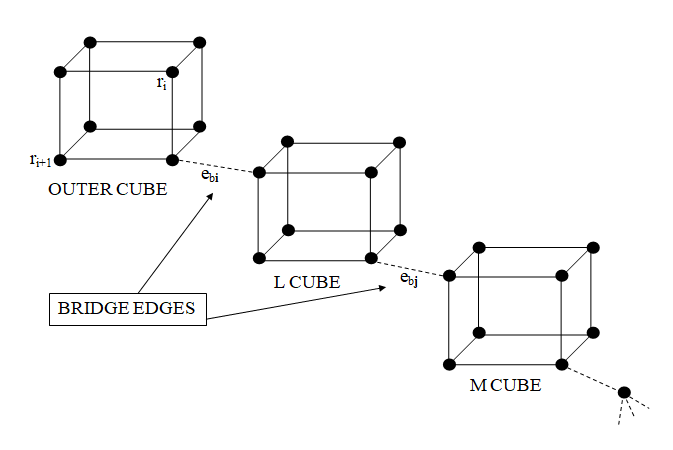}
			\caption{ Chain of cubes ($CH_{c}$) in $CCS(n)$.}\label{cu333}
		\end{figure}
	\end{center}
	\FloatBarrier

	(iv) When both the arbitrarily selected edges are on distinct chains of cubes (say $CH_{c_1}$ and $CH_{c_2}$) and these two chains are connected to a common cube called branching cube.\\\\
	Let us assume that the two  selected edges $e_M$ and $e_N$ are on two distinct cubes namely $M$-cube and $N$-cube respectively. Further, these cubes are on distinct chains of cubes (say $CH_{c_1}$ and $CH_{c_2}$) connected at branching $B$ cube as shown in Fig. 6.\\
	  Assume that the chain of cubes $CH_{c_1}$ and $CH_{c_2}$ in $CCS(n)$, has one end as branching cube and the other end as outermost cubes, which has a pair of distinct arbitrary edge resolving vertices $r_i = r_1, r_{i+1}$ and $r_j = r_3, r_{j+1}$. Also, the length of the shortest path between vertex $r_1$ and edge  $e_M$ on the $M$ cube is greater than the length of the shortest path between vertex $r_1$ and edge $e_N$ on the $N$ cube. This implies $d(e_N,r_1)\neq d(e_M,r_1)$, and hence $r(e_N,R_E) \neq r(e_M,R_E)$.
		\begin{center}
		\begin{figure}[h]
			\centering
			\includegraphics[width=3.5in]{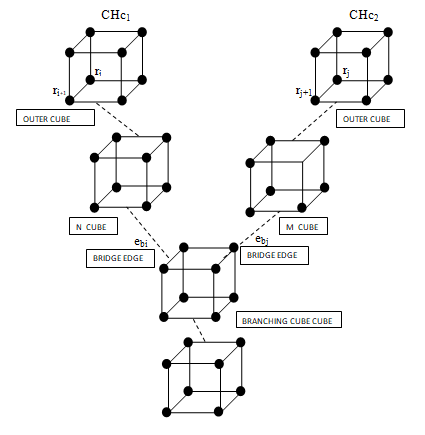}
			\caption{The distinct chains of cubes through branching cube}\label{cu444}
		\end{figure}
	\end{center}
	\FloatBarrier
	
	(v)  When both the arbitrarily selected bridge edges are on distinct chains of cubes (say $CH_{c_1}$ and $CH_{c_2}$) and these two chains are connected to a common cube called branching cube.\\\\
	Let us assume that the two  selected edges $e_{b_i}$ and $e_{b_j}$ are on two distinct chains of cubes namely $CH_{c_1}$  and $CH_{c_2}$ respectively. Further, these distinct chains are connected at branching $B$ cube as shown in Fig. 6.\\
	Assume that the chain of cubes $CH_{c_1}$ and $CH_{c_2}$ in $CCS(n)$, has one end as branching cube and the other end as outermost cubes, which has a pair of distinct arbitrary edge resolving vertices $r_i = r_1, r_{i+1}$ and $r_j = r_3, r_{j+1}$. Also, the length of the shortest path between vertex $r_1$ and edge  $e_{b_j}$ is greater than the length of the shortest path between vertex $r_1$ and edge $e_{b_i}$. This implies $d(e_{b_i},r_1)\neq d(e_{b_j},r_1)$, and hence $r(e_{b_i},R_E) \neq r(e_{b_j},R_E)$.\\\\
	
	(vi) When both the arbitrarily selected edges are on the central cube as shown in Fig. 7.
		\begin{center}
		\begin{figure}[h!]
			\centering
			\includegraphics[width=3.0in]{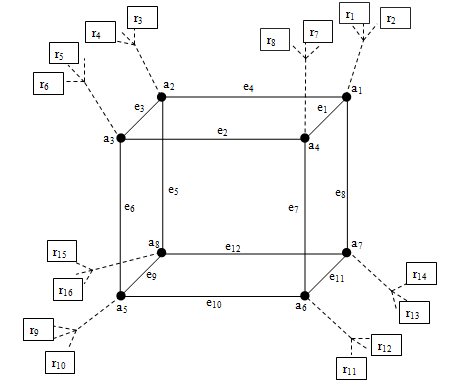}
			\caption{Central Cube of $CCS(n)$ and $r_i$,$r_{i+1}$ are vertices on the outermost level of cubes.}\label{cu5}
		\end{figure}
	\end{center}
	\FloatBarrier

	Assume that the two arbitrary selected edges are on the central cube. We have labelled all 8 vertices as $a_1$,$a_2$...$a_8$ and edges as $e_i$, $i = 1,2,3...,12$. Without loss of generality, we assume that vertices $r_{2i-1}, r_{2i}$, $i = 1,2,3...,8$, are on the outermost layer cubes in $CCS(n)$, and these outermost cubes having vertices $r_{2i-1}, r_{2i}$ are connected to the central cube at vertex $a_i$, $1\leq i\leq 8$, through a chain of cubes. Since each vertex is joined by three edges of the central cube ($a_1$ is joined by edge $e_1,e_4,e_8$) so the shortest distance of these edges from vertex $r_1$ and $r_2$ is the same but simultaneously edge $e_1$ is also joined by vertex $a_2$ but edges $e_4$ and $e_8$ are not joined by $a_2$. This further implies that the shortest path between edge $e_1$ and vertex $r_3$ is not the same as edges $e_4$ and $e_8$, and so on for other edges. Hence the representation is distinct for each edge concerning $R_E$.\\\\
	
	(vii) When both the arbitrarily selected edges are on the middle cube, which is neither the outer cube nor the central cube.
	 \begin{center}
		\begin{figure}[h!]
			\centering
			\includegraphics[width=4.5in]{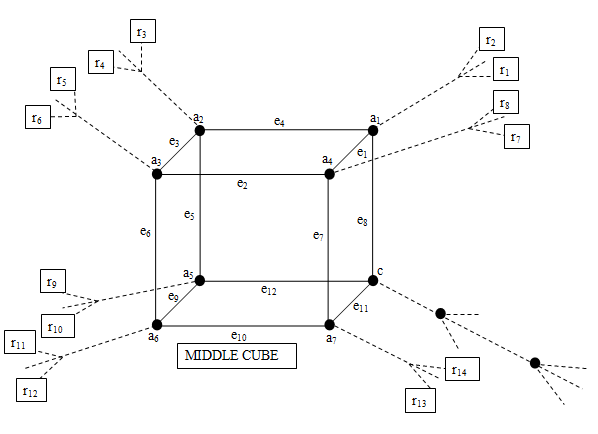}
			\caption{An arbitrary middle cube $C_m$ which is neither the outermost cube nor the central cube of $CCS(n)$.}\label{cu999}
		\end{figure}
	\end{center} 
	
	 Assume that the middle cube contains two arbitrary selected edges as shown in Fig. 8. We have labelled all 8 vertices as $c$,$a_1$,$a_2$...$a_7$ and edges as $e_i$, $i = 1,2,3...,12$.
	 We can assume that the vertices $r_1$ and $r_2$ lie on the cube (say $C_j$) of the outermost layer of $CCS(n)$ and that $C_j$ is joined to the middle cube $C_m$ at vertex $a_1$ by a chain of cubes without losing generality. Similarly, we can suppose that $r_{2i-1}, r_{2i}$ lie on the cubes of the outermost layer in $CCS(n)$, and these cubes are connected to middle cube $C_m$ at vertices $a_i$, $ 2\leq i \leq 7$, through the chain of cubes. As in case (vi), we find that each vertex is joined by three edges of cube $C_m$. Similar to the above case the representation is distinct for each edge concerning $R_E$.\\\\ 
	  
	 (viii) When one of the arbitrarily selected edges is on one of the cube, and the other is the bridge edge between cubes.\\
	 
	      Clearly, by the symmetry of the structure, the representation is distinct for each edge concerning $R_E$.\\
	      
	All these cases prove that $R_E = \{r_1,r_2,r_3...r_k\}$ is resolving set. Hence $edim(CCS(n))\leq 7^{n-2}\times 16$.\\\\
	
	Next, we claim that $edim\big(CCS(n)\big) \geq 7^{n-2} \times 16$. let $C_n$ be an arbitrary cube on the outermost layer of $CCS(n)$ as shown in Fig. 9. We have labelled all the vertices of $C_n$ as $a_i$, $i = 1,2,3...8$, and edges as $e_i$, $i = 1,2,3...,12$. The degree of each vertex is 3, except the vertex $a_1$, which is attached to the chain of the cube through an edge. Let $R_E = \{r_1,r_2,...r_k\}$ be an edge metric generator of $CCS(n)$. In order to complete the proof, we have to show that the edge metric generator $R_E$ has a minimum of two vertices from  $C_n$. To begin, we assume $v\in R_E$ such that $v\notin C_n$, then $r\left[e_1|R_E\right] = \big(d(a_1,r_1),d(a_1,r_2),d(a_1,r_3),...d(a_1,r_k)\big)$ = $r\left[e_{11}|R_E\right]$
	 which is a contradiction.\\
		\begin{center}
		\begin{figure}[h!]
			\centering
			\includegraphics[width=2.5in]{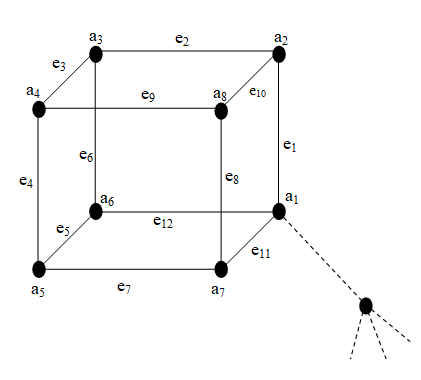}
			\caption{Outermost cube ($C_n$) of $CCS(n)$.}\label{cu888}
		\end{figure}
	\end{center}
	\FloatBarrier
	Secondly, we suppose that only one vertex of $R_E$ belongs to $C_n$. Without loss of generality, we assume that this common vertex is $r_1$.\\
	
	If $r_1 = a_1$, then\\
	$d(e_1|R_E) = \{0,d(a_1,r_2),d(a_1,r_3),...,d(a_1,r_k)\} = d(e_{11}|R_E)$, a contradiction.\\
	
	If $r_1 = a_2$, then\\
	$d(e_2|R_E) = \{0,d(a_1,r_2)+1,d(a_1,r_3)+1,...,d(a_1,r_k)+1\} = d(e_{10}|R_E)$, a contradiction.\\
	
	If $r_1 = a_3$, then\\
	$d(e_2|R_E) = \{0,d(a_1,r_2)+1,d(a_1,r_3)+1,...,d(a_1,r_k)+1\} = d(e_{6}|R_E)$, a contradiction.\\
	
	If $r_1 = a_4$, then\\
	$d(e_3|R_E) = \{0,d(a_1,r_2)+2,d(a_1,r_3)+2,...,d(a_1,r_k)+2\} = d(e_{9}|R_E)$, a contradiction.\\
	
	If $r_1 = a_5$, then\\
	$d(e_5|R_E) = \{0,d(a_1,r_2)+1,d(a_1,r_3)+1,...,d(a_1,r_k)+1\} = d(e_{7}|R_E)$, a contradiction.\\
	
	If $r_1 = a_6$, then\\
	$d(e_5|R_E) = \{0,d(a_1,r_2)+1,d(a_1,r_3)+1,...,d(a_1,r_k)+1\} = d(e_{6}|R_E)$, a contradiction.\\
	
	If $r_1 = a_7$, then\\
	$d(e_7|R_E) = \{0,d(a_1,r_2)+1,d(a_1,r_3)+1,...,d(a_1,r_k)+1\} = d(e_{8}|R_E)$, a contradiction.\\
	
	If $r_1 = a_8$, then\\
	$d(e_8|R_E) = \{0,d(a_1,r_2)+1,d(a_1,r_3)+1,...,d(a_1,r_k)+1\} = d(e_{10}|R_E)$, a contradiction.\\
	
	Our claim was backed by the inconsistency in all of the circumstances. As a result, the edge metric genarator $R_E$ of $CCS(n)$ has minimum of two vertices from the outermost cube $C_n$. The cubes in $CCS(n)$ are increased by a number equal to $7$ multiplied by the number of cubes in the outermost layer of the preceding level at each step or level, as can be seen from the construction of $CCS(n)$. This implies that $edim\big(CCS(n)\big) \geq 7^{n-2} \times 16$. Hence $edim\big(CCS(n)\big) = 7^{n-2} \times 16$.
	 
\end{proof}

  \hspace{-5.0mm}\textbf{4. Conclusion}\\	

In this article, the edge metric dimension of $CCS(n)$ has been studied. In particular, we have computed that for $n = 1$, $edim(CCS(n)) = 3$, and for $n\geq2$, $edim(CCS(n)) = 7^{n-2}\times 16$, which is similar to metric dimension of $CCS(n)$. These results are useful to study the structural properties of the chemical compound $CCS(n)$. In the future, we will extend our approach to find the fault-tolerant metric dimension, and the mixed metric dimension of $CCS(n)$.\\\\

 \hspace{-5.0mm}\textbf{5. Declarations}\\\\
 \hspace{-5.0mm}\textbf{Data availability statement}\\
 
 This article does not qualify for data sharing since no data sets were generated or analysed during this research.\\\\
 \hspace{-5.0mm}\textbf{Novelty statement}\\
 
 The edge resolvability of various well-known graphs has recently been computed. For instance, convex polytopes, the web graph, circulant graphs, certain chemical structures, and so on. In chemistry, one of the most important problems is to represent a series of chemical compounds mathematically so that each component has its representation. Graph invariants play an important role  to analyze the abstract structures of chemical graphs. But there are still some chemical graphs for which the vertex and edge resolvability has not been found yet, one such compound is crystal cubic carbon structure $CCS(n)$. Therefore, in this article, we compute the edge resolvability of $CCS(n)$, for $n\geq 1$ and see that $edim(CCS(n)) = dim(CCS(n))$.\\
 
 \hspace{-5.0mm}\textbf{Conflict of interest}\\
 
 The authors disclose that they have no competing interests.

\end{document}